\newtheorem{theorem}{Theorem}
\newtheorem{definition}{Definition}
\newtheorem{lemma}{Lemma}
\journal{Journal of \LaTeX\ Templates}
\begin{document}

\begin{frontmatter}

\title{On a class of inverse problems for a heat equation with involution perturbation}


\author[address1]{Nasser Al-Salti}
\ead{nalsalti@squ.edu.om}

\author[mymainaddress]{Mokhtar Kirane\corref{mycorrespondingauthor}}
\cortext[mycorrespondingauthor]{Corresponding author}
\ead{mokhtar.kirane@univ-lr.fr}

\author[address2]{Berikbol T. Torebek}
\ead{torebek@math.kz}

\address[address1]{Department of Mathematics and Statistics, Sultan Qaboos University, PO Box 36, Al-Khodh, PC 123, Muscat, Oman}
\address[mymainaddress]{LaSIE, Facult\'{e} des Sciences et Technologies, Universit\'{e} de La Rochelle, Avenue Michel Cr\'{e}peau, 17000, La Rochelle, France}
\address[address2]{Institute of Mathematics and Mathematical Modeling, 050010, Pushkin st., 125, Almaty, Kazakhstan}

\begin{abstract}
A class of inverse problems for a heat equation with involution perturbation is considered using four different boundary conditions, namely, Dirichlet, Neumann, periodic and anti-periodic boundary conditions. Proved theorems on existence and uniqueness of solutions to these problems are presented. Solutions are obtained in the form of series expansion using a set of appropriate orthogonal basis for each problem. Convergence of the obtained solutions is also discussed.
\end{abstract}

\begin{keyword}
Inverse Problems, Heat Equation, Involution Perturbation
\MSC[2010] 35R30; 35K05; 39B52
\end{keyword}

\end{frontmatter}


\section{Introduction}

Differential equations with modified arguments are equations in which the unknown function and its derivatives are evaluated with modifications of  time or space variables; such equations are called in general functional differential equations. Among such equations, one can single out, equations with involutions \cite{Cabada2}. 

\begin{definition} \cite{Carleman, Wiener2} A function $\alpha(x)\not\equiv x,$ that maps a set of real numbers, $\Gamma$ onto itself and satisfies on $\Gamma$ the condition $$\alpha\left(\alpha(x)\right)=x,\quad \textrm{or}\quad \alpha^{-1}(x)=\alpha(x)$$ is called an involution on $\Gamma.$
\end{definition}
Equations containing involution are equations with an alternating deviation (at $x^* < x$ being equations with advanced, and at $x^*> x$ being equations with delay, where $x^*$ is a fixed point of the mapping $\alpha(x)$).

Equations with involutions have been studied by many researchers, for example, Ashyralyev \cite{Ashyralyev1, Ashyralyev2}, Babbage \cite{Babbage}, Przewoerska-Rolewicz \cite{D1,D2,D3,D4,D5,D6,D7}, Aftabizadeh and Col. \cite{Aftabizadeh}, Andreev \cite{Andreev1,Andreev2}, Burlutskayaa and Col. \cite{Burlutskayaa}, Gupta \cite{Gupta1,Gupta2,Gupta3}, Kirane \cite{Kirane2}, Watkins \cite{Watkins}, and Wiener \cite{Viner1,Viner2,Wiener1,Wiener2}. Spectral problems and inverse problems for equations with involutions have received a lot of attention as well, see for example, \cite{Kirane1, Kopzhassarova,Sadybekov,Sarsenbi1,Sarsenbi2}, and equations with a delay in the space variable have been the subject of many research papers, see for example, \cite{Aliev, Rus}.
Furthermore, for the equations containing transformation of the spatial variable in the diffusion term , we can cite the talk of Cabada and Tojo \cite{Cabada1}, where they gave an example that describes a concrete situation in physics: Consider a metal wire around a thin sheet of insulating material in a way that some parts overlap some others as shown in Figure \ref{fig1}.

\begin{figure}[h!]
\begin{center}
  \includegraphics[width=0.25\linewidth]{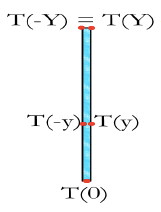}
  \ \ \caption{An application of heat equation with involution}
  \label{fig1}
\end{center}
\end{figure}

Assuming that the position $y = 0$ is the lowest of the wire, and the insulation goes up to the left at $-Y$ and to the right up to $Y.$ For the proximity of two sections of wires they added the third term with modifications on the spatial variable to the right-hand side of the heat equation with respect to the wire: $$\frac{\partial T}{\partial t}(y,t)=a \frac{\partial^2 T}{\partial y^2}(y,t)+b \frac{\partial^2 T}{\partial y^2}(-y,t).$$
Such equations have also a purely theoretical value. For general facts about partial functional differential equations and for properties of equations with involutions in particular, we refer the reader to the books of Skubachevskii \cite{Skubachevskii}, Wu \cite{Wu} and Cabada and Tojo \cite{Cabada2}.

In this paper, we consider inverse problems for a heat equation with involution using four different boundary conditions. We seek formal solutions to these problems in a form of series expansions using orthogonal basis obtained by separation of variables and we also examine the convergence of the obtained series solutions. The main results on existence and uniqueness are formulated in four theorems in the last section of this paper along with an illustrating example.

Concerning inverse problems for heat equations, some recent works have been implemented by Kaliev \cite{Kaliev1,Kaliev2}, Sadybekov \cite{Orazov1,Orazov2}, Kirane \cite{Furati, Kirane3}.

\section{Statements of Problems}
Consider the heat equation

\begin{equation}\label{2.1}u_t \left( {x,t} \right) - u_{xx} \left( {x,t} \right) +
\varepsilon u_{xx} \left( { - x,t} \right) = f\left( x \right), \quad
\left( {x,t} \right) \in \Omega,
\end{equation}
where, $\varepsilon$ is a nonzero real number such that $\left| \varepsilon \right| < 1$ and  $\Omega$ is a rectangular domain given by $\Omega = \left\{ {- \pi  < x < \pi, \, 0 < t < T} \right\}$. Our aim is to find a regular solution to the following four inverse problems: \\

{\bf IP1: Inverse Problem with Dirichlet Boundary Conditions.} \\ Find a pair of functions $u\left( {x,t} \right)$ and $f\left( x \right)$ in the domain $\Omega $ satisfying equation (\ref{2.1}) and the conditions
\begin{equation}\label{2.2}
u\left({x,0} \right) = \varphi \left( x \right), \quad u\left( {x,T} \right) =
\psi \left( x \right), \quad x \in \left[ { - \pi ,\pi } \right],
\end{equation}
and the homogeneous Dirichlet boundary conditions
\begin{equation}\label{2.3}
u\left( { - \pi ,t} \right) = 0, \quad u\left( {\pi ,t} \right) = 0, \quad t \in \left[ {0,T}
\right],\end{equation}
where $\varphi \left( x \right)$ and $\psi
\left( x \right)$ are given, sufficiently smooth functions.\\

{\bf IP2: Inverse Problem with Neumann Boundary Conditions.} \\ Find a pair of functions $u\left( {x,t} \right)$ and $f\left( x \right)$ in the domain $\Omega $ satisfying equation (\ref{2.1}), conditions (\ref{2.2}) and the homogeneous Neumann boundary conditions
\begin{equation}\label{2.4}
u_x \left( { - \pi ,t} \right) = 0, \quad u_x \left( {\pi ,t} \right) = 0, \quad t \in \left[ {0,T}\right].
\end{equation}

{\bf IP3: Inverse Problem with Periodic Boundary Conditions.} \\ Find a pair of functions $u\left( {x,t} \right)$ and $f\left( x \right)$ in the domain $\Omega $ satisfying equation (\ref{2.1}), conditions (\ref{2.2}) and the periodic boundary conditions
\begin{equation}\label{2.5}
u\left( { - \pi ,t} \right) = u\left(
{\pi ,t} \right), \quad u_x \left( { - \pi ,t} \right) = u_x \left( {\pi
,t} \right), \quad t \in \left[ {0,T} \right].
\end{equation}

{\bf IP4: Inverse Problem with Anti-Periodic Boundary Conditions.} \\ Find a pair of functions $u\left( {x,t} \right)$ and $f\left( x \right)$ in the domain $\Omega $ satisfying equation (\ref{2.1}), conditions (\ref{2.2}) and the anti-periodic boundary conditions
\begin{equation}\label{2.6}
u\left( { - \pi ,t} \right) = - u\left(
{\pi ,t} \right), \quad u_x \left( { - \pi ,t} \right) = - u_x \left( {\pi
,t} \right), \quad t \in \left[ {0,T} \right].
\end{equation}

By a regular solution of problems IP1, IP2, IP3 and IP4, we mean a pair of
functions $u\left( {x,t} \right)$ and $f\left( x \right)$ of the
class $u\left( {x,t} \right) \in C_{x,t}^{2,1} \left( { \Omega
} \right),$ $f\left( x \right) \in C\left[ { - \pi ,\pi }
\right].$

\section{Solution Method}
Here we seek a solution to problems IP1, IP2, IP3 and IP4 in a form of series expansion using a set of functions that form orthogonal basis in $L_2(-\pi, \pi)$. To find the appropriate set of functions for each problem, we shall solve the homogeneous equation corresponding to equation (\ref{2.1}) along with the associated boundary conditions using separation of variables.

\subsection{Spectral Problems}
Separation of variables leads to the following spectral problems for IP1, IP2, IP3 and IP4, respectively,
\begin{equation}\label{SP IP1}
X''(x)-\epsilon X''(-x)+\lambda X(x)=0,\quad \quad X(-\pi)=X(\pi)=0,
\end{equation}
\begin{equation}\label{SP IP2}
X''(x)-\epsilon X''(-x)+\lambda X(x)=0,\quad X'(-\pi)=X'(\pi)=0,
\end{equation}
\begin{equation}\label{SP IP3}
X''(x)-\epsilon X''(-x)+\lambda X(x)=0,\quad X(-\pi)=X(\pi), \, X'(-\pi)=X'(\pi),
\end{equation}
\begin{equation}\label{SP IP4}
X''(x)-\epsilon X''(-x)+\lambda X(x)=0,\, X(-\pi)=-X(\pi), \, X'(-\pi)=-X'(\pi).
\end{equation}
The eigenvalue problems (\ref{SP IP1}) - (\ref{SP IP4}) are self-adjoint and hence they have real eigenvalues and their eigenfunctions form a complete orthogonal basis in $L_2 \left( { - \pi ,\pi } \right)$ \cite{Naim}. Their eigenvalues are, respectively, given by
\begin{equation*} \label{eigenvalue IP1}
\lambda _{1k}  = \left( {1 - \varepsilon } \right)\left( {k + \frac{1}{2}} \right)^2, \, k \in \mathbb{N}\cup\left\{0\right\}, \quad \lambda _{2k}  = \left( {1 + \varepsilon }\right)k^2, \, k \in \mathbb{N}, \tag{7.a}
\end{equation*}
\begin{equation*}\label{eigenvalue IP2}
\lambda _{1k}  = \left( {1 - \varepsilon } \right)k^2, \quad \lambda _{2k}  = \left( {1 + \varepsilon }
\right)\left( {k + \frac{1}{2}} \right)^2, \quad k \in \mathbb{N}\cup\left\{0\right\}, \tag{8.a}
\end{equation*}
\begin{equation*}\label{eigenvalue IP3}
\lambda _{1k}  = \left( {1 - \varepsilon } \right)k^2, \, k \in \mathbb{N}\cup\left\{0\right\}, \quad  \quad \lambda _{2k}  = \left( {1 + \varepsilon }\right)k^2, \, k \in \mathbb{N}, \tag{9.a}
\end{equation*}
\begin{equation*}\label{eigenvalue IP4}
\lambda _{1k}  = \left( {1 + \varepsilon } \right)\left( {k + \frac{1}{2}} \right)^2, \quad  \lambda _{2k}  = \left( {1 - \varepsilon }\right)\left( {k + \frac{1}{2}} \right)^2, \, k \in \mathbb{N}\cup\left\{0\right\}, \tag{10.a}
\end{equation*}
and the corresponding eigenfunctions are given by
\begin{equation*}\label{basis IP1}
X_{1k} = \cos \left( {k + \frac{1}{2}} \right)x,\,\, k \in \mathbb{N}\cup\left\{0\right\}, \quad X_{2k} = \sin kx, \, \, k \in \mathbb{N}, \tag{7.b}
\end{equation*}
\begin{equation*}\label{basis IP2}
X_0=1, \,  X_{1k} = \cos kx,  \, k \in \mathbb{N}, \, \, \, X_{2k} = \sin \left( {k + \frac{1}{2}} \right)x,\, k \in \mathbb{N}\cup\left\{0\right\},\tag{8.b}
\end{equation*}
\begin{equation*}\label{basis IP3}
X_0=1, \, \, \quad X_{1k} = \cos kx,\,\, \quad X_{2k} = \sin kx, \, \, \quad k \in \mathbb{N}. \tag{9.b}
\end{equation*}
\begin{equation*}\label{basis IP4}
X_{1k} =\sin \left( {k + \frac{1}{2}} \right)x \quad X_{2k} = \cos \left( {k + \frac{1}{2}} \right)x  , \, \, \quad k \in \mathbb{N}\cup\left\{0\right\}. \tag{10.b}
\end{equation*}

\begin{lemma}\label{l1}
The systems of functions (\ref{basis IP1}) - (\ref{basis IP4}) are complete and orthogonal in $L_2\left( { - \pi ,\pi } \right).$
\end{lemma}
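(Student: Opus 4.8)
The plan is to verify completeness and orthogonality separately, and to exploit the fact that each of the four systems is a union of two families: one consisting of even functions (cosines) and one of odd functions (sines). First I would recall the classical fact that on $L_2(-\pi,\pi)$ the decomposition into even and odd parts is orthogonal, so it suffices to check orthogonality \emph{within} each family and then observe that cross terms (even against odd) vanish automatically by parity. Within a family the functions are of the form $\cos\mu_k x$ or $\sin\nu_k x$ with the relevant $\mu_k,\nu_k$ either integers or half-integers; orthogonality then reduces to the elementary integrals $\int_{-\pi}^{\pi}\cos\mu x\cos\nu x\,dx$, $\int_{-\pi}^{\pi}\sin\mu x\sin\nu x\,dx$, which I would evaluate using product-to-sum formulas, checking in each of the four cases that the spectral parameters appearing in (7.b)--(10.b) are distinct and give vanishing integrals.

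For completeness I would argue by reduction to known complete systems. In each case the two families together, after the trivial rescaling $x\mapsto x/2$ or a shift, are exactly a classical trigonometric basis: for IP1 the system $\{\cos(k+\tfrac12)x\}_{k\ge 0}\cup\{\sin kx\}_{k\ge1}$ is the standard eigenbasis of the operator $-d^2/dx^2$ with mixed boundary data on $[-\pi,\pi]$ and is well known to be complete; for IP2 one gets the analogous system with the roles of sine and cosine interchanged plus the constant; for IP3 it is literally the full Fourier system $\{1,\cos kx,\sin kx\}$; and for IP4 it is $\{\sin(k+\tfrac12)x,\cos(k+\tfrac12)x\}_{k\ge0}$. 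Alternatively, and perhaps more cleanly, I would invoke the self-adjointness already asserted for the spectral problems (\ref{SP IP1})--(\ref{SP IP4}) together with the cited reference \cite{Naim}: a self-adjoint problem with discrete spectrum has a complete orthogonal eigensystem, and the functions (7.b)--(10.b) are precisely these eigenfunctions (one should also check no eigenvalue is missed, which follows from a direct count against the classical case $\varepsilon=0$ since the boundary conditions are unchanged).

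The steps in order would be: (i) split $L_2(-\pi,\pi)=L_2^{\mathrm{even}}\oplus L_2^{\mathrm{odd}}$ and note this splitting is orthogonal; (ii) for each of the four cases, identify which listed functions are even and which are odd, and record that even--odd inner products vanish; (iii) inside each parity class compute the relevant trigonometric integrals to confirm mutual orthogonality and nonvanishing norms; (iv) for completeness, match the cosine subfamily to a complete system of the even subspace and the sine subfamily to a complete system of the odd subspace, citing the classical Fourier/Sturm--Liouville theory, or alternatively invoke \cite{Naim} via self-adjointness of (\ref{SP IP1})--(\ref{SP IP4}).

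The main obstacle is completeness rather than orthogonality: orthogonality is a routine integral computation, but to conclude completeness one must argue that the \emph{half-integer} families (e.g. $\{\cos(k+\tfrac12)x\}_{k\ge0}$ as a basis of the even subspace, or $\{\sin(k+\tfrac12)x\}$ of the odd subspace) really do span, not merely that they are orthonormal. I would handle this by a change of variables reducing $\{\cos(k+\tfrac12)x\}_{k\ge0}$ on $[0,\pi]$ to the eigenfunctions of $-d^2/dx^2$ with a Dirichlet condition at one end and Neumann at the other — a standard complete Sturm--Liouville system — and then extending by parity; the same device covers all four cases. A secondary point to be careful about is that the perturbation $\varepsilon$ only rescales eigenvalues and leaves the eigenfunctions exactly equal to the $\varepsilon=0$ ones, so no eigenfunction is lost or gained, which is what lets the classical completeness transfer verbatim.
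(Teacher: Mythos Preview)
Your proposal is correct and follows essentially the same route as the paper: orthogonality is dismissed as a direct trigonometric computation, and completeness is obtained by the even/odd parity split of $L_2(-\pi,\pi)$ together with the known completeness of the half-integer cosine and integer sine systems on $(0,\pi)$ (the paper cites \cite{Moiseev} for this, where you invoke the mixed Dirichlet--Neumann Sturm--Liouville problem). Your alternative of appealing directly to self-adjointness via \cite{Naim} is in fact the remark the paper makes just before the lemma, but the paper's actual proof of the lemma is the parity-reduction argument you outline in steps (i)--(iv).
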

\begin{proof}
Here we present the proof for the system of functions (\ref{basis IP1}). The orthogonality follows from the direct calculations:
$$ \int_{-\pi}^{\pi} X_{1n}X_{2m}\, dx=0, \quad  n \in \mathbb{N}\cup\left\{0\right\}, \, m \in \mathbb{N},$$
and
$$ \int_{-\pi}^{\pi} X_{in}X_{im}\, dx=0, \quad m \ne n, \, i=1,2.$$
Hence, it only remains  to prove the completeness of the system in $L_2(-\pi, \pi)$, i.e., we need to show that if
\begin{equation} \label{complete1}
\int_{-\pi}^{\pi}f(x)\cos\left(k+\frac{1}{2}\right)x \, dx=0, \quad k \in \mathbb{N}\cup\left\{0\right\},
\end{equation}
and
\begin{equation}\label{complete2}
\int_{-\pi}^{\pi}f(x)\sin kx \,dx=0, \quad k\in N,
\end{equation}
then $f(x) \equiv 0 $ in $(-\pi, \pi)$.
To show this, we are going to use the fact that $\left\{\cos \left(k+\frac{1}{2}\right)x\right\}_{k \in \mathbb{N}\cup\left\{0\right\}}$ and $\left\{\sin kx\right\}_{k\in N}$ are complete in $L_2(0, \pi)$, see \cite{Moiseev} for example. Now, suppose that the equation (\ref{complete1}) holds. We then have
$$0=\int_{-\pi}^{\pi}f(x)\cos\left(k+\frac{1}{2}\right)x \,dx=\int_{0}^{\pi}\left(f(x)+f(-x)\right)\cos\left(k+\frac{1}{2}\right)x\,dx.$$
Hence, by the completeness of the system
$\left\{\cos \left(k+\frac{1}{2}\right)x\right\}_{k \in \mathbb{N}\cup\left\{0\right\}}$ in $L_2(0, \pi)$, we have $f(x)=-f(-x), \, -\pi<x<\pi$.
Similarly, if equation (\ref{complete2}) holds, we have
$$0=\int_{-\pi}^{\pi}f(x)\sin kx \,dx=\int_{0}^{\pi}\left(f(x)-f(-x)\right)\sin kx \,dx.$$
Then, by the completeness of the system $\left\{\sin kx\right\}_{k\in N}$ in $L_2(0, \pi)$, we have $f(x)=f(-x), \, -\pi<x<\pi.$ Therefore, we must have $f(x) \equiv 0$ in $(-\pi, \pi).$
Completeness and orthogonality of the systems of functions (\ref{basis IP2}) - (\ref{basis IP4}) can be proved similarly.
\end{proof}

Since each one of the systems of eigenfunctions (\ref{basis IP1}) - (\ref{basis IP4}) is complete and forms a basis in $L_2\left(-\pi, \pi\right)$, the solution pair $u(x,t)$ and $f(x)$ of each inverse problem can be expressed  in a form of series expansion using the appropriate set of eigenfunctions.

\subsection{Existence of Solutions}
Here, we give a full proof of existence of solution to the Inverse Problem IP1. Existence of solutions to the other three problems can be proved similarly. Using the orthogonal system (\ref{basis IP1}), the functions $u\left(
{x,t} \right)$ and $f\left( x \right)$ can be represented as follows
\begin{equation}\label{6.1}
u\left( {x,t} \right) = \sum\limits_{k
= 0}^\infty  {u_{1k} \left( t \right)\cos \left( {k + \frac{1}{2}}
\right)x}  + \sum\limits_{k = 1}^\infty {u_{2k} \left( t \right)\sin
kx},\end{equation}
\begin{equation}\label{6.2}
f\left( x \right) =
\sum\limits_{k = 0}^\infty {f_{1k} \cos \left( {k + \frac{1}{2}}
\right)x}  + \sum\limits_{k = 1}^\infty {f_{2k} \sin kx},
\end{equation}
where the coefficients $u_{1k} \left( t \right),u_{2k}
\left( t \right), f_{1k} ,f_{2k}$ are unknown. Substituting (\ref{6.1}) and
(\ref{6.2}) into equation (\ref{2.1}), we obtain the following equations relating the functions $u_{1k} \left( t \right),$ $u_{2k} \left( t \right)$ and the constants $f_{1k}, f_{2k}$:
\begin{equation} \label{u1k_eqn}
u'_{1k} \left( t \right) + \left({1 - \varepsilon } \right)\left( {k + \frac{1}{2}} \right)^2 u_{1k}\left( t \right) = f_{1k} ,
\end{equation}
\begin{equation} \label{u2k_eqn}
u'_{2k} \left( t \right) + \left( {1 + \varepsilon } \right)k^2 \, u_{2k} \left( t \right) = f_{2k}.
\end{equation}
Solving these equations we obtain
$$u_{1k} \left( t \right) = \frac{{f_{1k} }}{{\left( {1 - \varepsilon } \right)\left( {k + \frac{1}{2}} \right)^2 }} + C_{1k} e^{ - \left( {1 - \varepsilon } \right)\left( {k + \frac{1}{2}} \right)^2 t},$$
$$u_{2k} \left( t \right) = \frac{{f_{2k} }}{{\left( {1 + \varepsilon } \right)k^2 }} + C_{2k} e^{ - \left( {1 + \varepsilon } \right)k^2 t},$$
where the unknown constants $C_{1k} ,$ $C_{2k} ,$ $f_{1k},$ $f_{2k}$ are to be determined using the conditions in (\ref{2.2}). Let $\varphi
_{ik} ,\psi _{ik} ,i = 1,2$ be the coefficients of the series expansions
of $\varphi \left( x \right)$ and $\psi \left( x \right)$, respectively, i.e.,
$$\varphi _{1k}  =  \frac{1}{\pi}\int\limits_{ - \pi }^\pi  {\varphi \left( x \right)\cos \left( {k + \frac{1}{2}} \right)x \, dx} , \quad \varphi
_{2k}  =  \frac{1}{\pi}\int\limits_{ - \pi }^\pi  {\varphi \left( x \right)\sin kx \, dx},$$
$$\psi _{1k}  =  \frac{1}{\pi}\int\limits_{ - \pi }^\pi  {\psi \left( x \right)\cos \left( {k + \frac{1}{2}} \right)x \, dx}, \quad \psi _{2k} =
 \frac{1}{\pi}\int\limits_{ - \pi }^\pi  {\psi \left( x \right)\sin kx \, dx}.$$
Then, the two conditions in (\ref{2.2}) leads to
$$\frac{{f_{1k}
}}{{\left( {1 - \varepsilon } \right)\left( {k + \frac{1}{2}}
\right)^2 }} + C_{1k}  = \varphi _{1k}, \quad \frac{{f_{1k} }}{{\left( {1 - \varepsilon } \right)\left( {k +
\frac{1}{2}} \right)^2 }} + C_{1k} e^{ - \left( {1 - \varepsilon }
\right)\left( {k + \frac{1}{2}} \right)^2 T}  = \psi _{1k}, $$

$$\frac{{f_{2k}
}}{{\left( {1 + \varepsilon } \right)k^2 }} + C_{2k}  = \varphi
_{2k}, \quad \frac{{f_{2k}}}{{\left( {1 +
\varepsilon } \right)k^2 }} + C_{2k} e^{ - \left( {1 + \varepsilon
} \right)k^2 T}  = \psi _{2k}.$$
Solving these set of algebraic equations, we get
$$C_{1k}  = \frac{{\varphi _{1k}  - \psi _{1k} }}{{1 - e^{ - \left(
{1 - \varepsilon } \right)\left( {k + \frac{1}{2}} \right)^2 T}
}}, \quad f_{1k}  = \left( {1
- \varepsilon } \right)\left( {k + \frac{1}{2}} \right)^2 \left( \varphi
_{1k}  - C_{1k} \right),$$

$$C_{2k}  = \frac{{\varphi _{2k}  - \psi _{2k} }}{{1 - e^{
- \left( {1 + \varepsilon } \right)k^2 T} }}, \quad  \quad  \quad f_{2k}  = \left( {1 + \varepsilon } \right)k^2  \left(\varphi _{2k}  - C_{2k} \right).$$
Now, substituting $u_{1k}\left( t \right),$ $u_{2k} \left( t \right),$ $f_{1k},$ $f_{2k}$ into (\ref{6.1}) and (\ref{6.2}) we get
\begin{eqnarray*}
u\left( {x,t} \right) =
\varphi \left( x \right) &+& \sum\limits_{k = 0}^\infty  {C_{1k}
\left( {e^{ - \left( {1 - \varepsilon } \right)\left( {k +
\frac{1}{2}} \right)^2 t}  - 1} \right)\cos \left( {k +
\frac{1}{2}} \right)x} \\
 &+& \sum\limits_{k = 1}^\infty {C_{2k}
\left( {e^{ - \left( {1 + \varepsilon } \right)k^2 t}  - 1}
\right)\sin kx},
\end{eqnarray*}
and
\begin{eqnarray*}
f\left( x
\right) = -\varphi'' \left( x \right) +\varepsilon\varphi'' \left( -x \right)
&-& \sum\limits_{k = 0}^\infty {\left( 1 - \varepsilon \right)  \left( {k + \frac{1}{2}} \right)^2 C_{1k}  \cos \left( {k + \frac{1}{2}} \right)x} \\
&-& \sum\limits_{k = 1}^\infty {\left( 1 +\varepsilon \right) k ^2 \, C_{2k} \sin kx} .
\end{eqnarray*}
Note that for $f\left( x \right) \in C\left[ { - \pi ,\pi }
\right]$, it is required that $\varphi(x) \in C^2\left[ { - \pi ,\pi }\right]$.

\subsection{Convergence of Series}
In order to justify that the obtained formal solution is indeed a true solution, we need to show that the series appeared in $u(x,t)$ and $f(x)$ as well as the corresponding series representations of $u_{xx}(x,t)$ and $u_t(x,t)$ converge uniformly in $\Omega$. For this purpose, let
$$\varphi ^{\left( i \right)} \left( { - \pi } \right) = \varphi ^{\left( i \right)} \left( \pi  \right) = 0, \quad i = 0,2,$$
$$\psi ^{\left( i \right)} \left( { - \pi } \right) = \psi ^{\left( i \right)} \left( \pi  \right) = 0, \quad i = 0,2.$$
Hence, on integration by parts, $C_{1k}$  and   $C_{2k}$ can now be rewritten as
$$C_{1k}  = \frac{{\varphi _{2k}^{\left( 3 \right)}  - \psi
_{2k}^{\left( 3 \right)} }}{{\left( {1 - e^{ - \left( {1 -
\varepsilon } \right)\left( {k + \frac{1}{2}} \right)^2 T} }
\right)\left( {k + \frac{1}{2}} \right)^3 }}, \quad C_{2k} = -\frac{{\varphi _{1k}^{\left( 3 \right)}  - \psi
_{1k}^{\left( 3 \right)} }}{{\left( {1 - e^{ - \left( {1 +
\varepsilon } \right)k^2 T} } \right)k^3 }}.$$
where,
$$\varphi _{1k} ^{(3)} =  \frac{1}{\pi}\int\limits_{ - \pi }^\pi  {\varphi''' \left( x \right)\cos kx \, dx} , \quad \varphi
_{2k}^{(3)}  =  \frac{1}{\pi}\int\limits_{ - \pi }^\pi  {\varphi''' \left( x \right)\sin \left( {k + \frac{1}{2}} \right) x  \, dx},$$
$$\psi _{1k}^{(3)}  =  \frac{1}{\pi}\int\limits_{ - \pi }^\pi  {\psi''' \left( x \right)\cos kx \,  dx}, \quad \psi _{2k}^{(3)} =
 \frac{1}{\pi}\int\limits_{ - \pi }^\pi  {\psi''' \left( x \right)\sin \left( {k + \frac{1}{2}} \right) x \, dx}.$$
Hence, the series representation of $u\left({x,t} \right)$ and $f(x)$ can be expressed as
\begin{eqnarray*}
u\left(
{x,t} \right) = \varphi \left( x \right) &+& \sum\limits_{k = 1}^\infty  {\frac{{{1-e^{ - \left( {1 + \varepsilon } \right)k^2 t}} }}{{{1 - e^{ - \left( {1 + \varepsilon } \right)k^2 T} } }}  \left( \frac{ {\varphi _{1k}^{\left( 3 \right)}  - \psi_{1k}^{\left( 3 \right)} }}{k^3} \right) \sin \, kx}\\
&-& \sum\limits_{k = 0}^\infty {\frac{{{1-e^{ - \left( {1 - \varepsilon }\right)\left( {k + \frac{1}{2}} \right)^2 t}} }}{{ {1 - e^{ - \left( {1 -
\varepsilon } \right)\left( {k + \frac{1}{2}} \right)^2 T} }}}\left( \frac{ {\varphi_{2k}^{\left( 3 \right)}  - \psi _{2k}^{\left( 3 \right)} }}{\left( {k + \frac{1}{2}} \right)^3} \right) \cos \left( {k + \frac{1}{2}} \right)x} ,
\end{eqnarray*}
 and

\begin{eqnarray*}
f\left( x
\right) = -\varphi'' \left( x \right) &+&\varepsilon\varphi'' \left( -x \right) +\sum\limits_{k = 1}^\infty  {\frac{1 + \varepsilon}{k} \left(\frac{\varphi _{1k}^{\left( 3 \right)}  - \psi
_{1k}^{\left(3 \right)} }{{{1 - e^{ - \left( {1+ \varepsilon } \right)k^2 T} }}} \right)\sin kx} \\
&-& \sum\limits_{k = 0}^\infty {\frac{\left( {1 - \varepsilon } \right)}{\left( {k + \frac{1}{2}} \right)} \left(\frac{{{\varphi
_{2k}^{\left( 3 \right)}  - \psi _{2k}^{\left( 3 \right)} }
}}{{{1 - e^{ - \left( {1 - \varepsilon }
\right)\left( {k + \frac{1}{2}} \right)^2 T} } }}\right)\cos \left( {k + \frac{1}{2}} \right)x}.
\end{eqnarray*}
For convergence, we then have the following estimates for $u\left( {x,t} \right)$ and $f \left(x \right)$
$$\left|
{u\left( {x,t} \right)} \right| \le \left| {\varphi \left( x
\right)} \right| +c \sum\limits_{k = 1}^\infty  {\frac{{\left|
{\varphi _{1k}^{\left( 3 \right)} } \right| + \left| {\psi
_{1k}^{\left( 3 \right)} } \right|}}{{k^3 }}}+ c\sum\limits_{k = 0}^\infty
{\frac{{\left| {\varphi _{2k}^{\left( 3 \right)} } \right| +
\left| {\psi _{2k}^{\left( 3 \right)} } \right|}}{{\left( {k + \frac{1}{2}} \right)^3 }}}$$
and
\begin{eqnarray*}
\left| {f\left( x \right)} \right| \le
\left| {\varphi'' \left( x \right)} \right| &+&\left| {\varphi'' \left( -x \right)} \right|+ c\sum\limits_{k =
1}^\infty {\left( \left| {\varphi _{1k}^{\left( 3 \right)} }
\right|^2 + \left| {\psi _{1k}^{\left( 3 \right)} }
\right|^2 + \frac{2}{k^2}\right)} \\
&+& c\sum\limits_{k =
0}^\infty {\left( \left| {\varphi _{2k}^{\left( 3 \right)} }
\right|^2 + \left| {\psi _{2k}^{\left( 3 \right)} }
\right|^2 + \frac{2}{\left( {k + \frac{1}{2}} \right)^2}\right)},
\end{eqnarray*}
for some positive constant $c$. Here, for the estimate of $f(x)$, we have used the inequality $2ab \le a^2 +b^2$. The convergence of the series in the estimate of $u(x,t)$ is clearly achieved if $\varphi^{(3)}_{ik} ,\psi^{(3)} _{ik} ,i = 1,2$ are finite. This can be ensured by assuming that $\varphi'''(x)$ and $\psi'''(x) \in L_2(-\pi, \pi)$. Furthermore, by Bessel inequality for trigonometric series, the following
series converge:
$$\sum\limits_{k =1}^\infty  {\left| {\varphi _{ik}^{\left(3 \right)} } \right|^2 \le } C\left\| {\varphi''' \left( x \right)}
\right\|_{L_2 \left( { - \pi ,\pi } \right)}^2 , \quad i =1,2, $$
$$\sum\limits_{k =1}^\infty  {\left| {\psi _{ik}^{\left(3 \right)} } \right|^2 \le } C\left\| {\psi''' \left( x \right)}
\right\|_{L_2 \left( { - \pi ,\pi } \right)}^2 , \quad i =1,2. $$
Therefore, by the Weierstrass
M-test (see.\cite{Knopp}), the series representations of $u(x,t)$ and $f(x)$
converge absolutely and uniformly in the region $\Omega .$ The convergence of the series representations of $u_{xx}(x,t)$ and $u_t(x,t)$ which are obtained by term-wise differentiation of the series representation of $u(x,t)$ can be shown is a similar way.

\subsection{Uniqueness of Solution}
Suppose that there are two solution sets $\left\{ {u_1 \left( {x,t} \right),f_1 \left( x \right)} \right\}$ and $\left\{ {u_2 \left( {x,t} \right), f_2\left( x \right)} \right\}$ to the Inverse Problem IP1. Denote $$u\left( {x,t} \right) = u_1 \left( {x,t} \right) - u_2 \left( {x,t} \right),$$
and
$$f\left( x \right) = f_1 \left( x \right) - f_2 \left( x
\right).$$ Then, the functions $u\left( {x,t} \right)$ and $f\left(
x \right)$ clearly satisfy equation (\ref{2.1}), the boundary conditions in (\ref{2.3}) and the homogeneous conditions
\begin{equation} \label{H_IC}
u\left({x,0} \right) = 0, \quad u\left( {x,T} \right) =
0, \quad x \in \left[ { - \pi ,\pi } \right]
\end{equation}
Let us now introduce the following
\begin{equation}\label{u1k}
u_{1k} \left(t \right) = \frac{1}{\pi}\int\limits_{ - \pi }^\pi {u\left( {x,t} \right)\cos \left(k+ \frac{1}{2}\right) x\,dx}, \quad k \in \mathbb{N}\cup\left\{0\right\},
\end{equation}
\begin{equation}\label{u2k}
u_{2k}
\left( t \right) = \frac{1}{\pi}\int\limits_{ - \pi }^\pi {u\left( {x,t}
\right)\sin kx \, dx}, \quad k \in N,
\end{equation}
\begin{equation}\label{f1k}
f_{1k} = \frac{1}{\pi}\int\limits_{ - \pi }^\pi {f\left( x \right)\cos \left(k+ \frac{1}{2}\right)x\, dx},  \quad k \in \mathbb{N}\cup\left\{0\right\},
\end{equation}
\begin{equation}\label{f2k}
f_{2k} = \frac{1}{\pi}\int\limits_{ - \pi }^\pi
{f\left( x \right)\sin kx \, dx}, \quad k \in N.
\end{equation}
Note that the homogeneous conditions in (\ref{H_IC}) lead to
\begin{equation}
u_{ik}(0)= u_{ik}(T)=0, \quad i=1,2,
\end{equation}
and differentiating equation (\ref{u1k}) gives
$$u'_{1k} \left( t
\right) = \frac{1}{\pi}\int\limits_{ - \pi }^\pi {\left( {u_{xx} \left( {x,t} \right) -
\varepsilon u_{xx} \left( { - x,t} \right)} \right)\cos \left(k+ \frac{1}{2}\right)x\, dx}  + f_{1k},$$
which on integrating by parts and using the conditions in (\ref{2.2}) reduces to
$$u'_{1k} \left( t
\right) = (\varepsilon-1) \left(k+\frac{1}{2}\right)^2 u_{1k}  + f_{1k}.$$
One can then easily show that this equation together with the conditions $u_{1k}(0)=u_{1k}(T)=0$ imply that
 $$f_{1k}  = 0, \quad u_{1k} \left( t \right) \equiv 0.$$
Similarly, for $u_{2k}$ and $f_{2k}$ as given in (\ref{u2k}) and (\ref{f2k}), respectively, one can show that
 $$f_{2k}  = 0, \quad u_{2k} \left( t \right) \equiv 0.$$
Therefore, due to the completeness of the system of eigenfunctions (\ref{basis IP1}) in $L_2 \left({ - \pi ,\pi } \right)$, we must have
 $$f\left( t \right) \equiv 0, \quad u\left(
{x,t} \right) \equiv 0,  \quad \left( {x,t} \right) \in \bar{\Omega} .$$
This ends the proof of uniqueness of solution to the Inverse Problem IP1. Uniqueness of solutions to the Inverse Problems IP2, IP3 and IP4 can be proved in a similar way.

\section{Main Results and Example Solution}

\subsection{Main Results} 
The main results for the Inverse Problems IP1, IP2, IP3 and IP4 can be summarized in the following theorems:

\begin{theorem}\label{th1} Let $\varphi \left( x \right),\psi \left( x \right) \in C^2 \left[ { - \pi ,\pi } \right]$, $\varphi'''(x)$, $\psi'''(x) \in L_2(-\pi, \pi)$ and $\varphi ^{\left( i\right)} \left( { \pm \pi } \right) = \psi ^{\left( i \right)} \left( { \pm \pi } \right) = 0,i = 0,2.$ Then, a unique solution to
the Inverse Problem IP1 exists and it can be written in the form
\begin{eqnarray*}
u\left(
{x,t} \right) = \varphi \left( x \right) &+& \sum\limits_{k = 1}^\infty  {\frac{{{1-e^{ - \left( {1 + \varepsilon } \right)k^2 t}} }}{{{1 - e^{ - \left( {1 + \varepsilon } \right)k^2 T} } }}  \left( \frac{ {\varphi _{1k}^{\left( 3 \right)}  - \psi_{1k}^{\left( 3 \right)} }}{k^3} \right) \sin \, kx}\\
&-& \sum\limits_{k = 0}^\infty {\frac{{{1-e^{ - \left( {1 - \varepsilon }\right)\left( {k + \frac{1}{2}} \right)^2 t}} }}{{ {1 - e^{ - \left( {1 -
\varepsilon } \right)\left( {k + \frac{1}{2}} \right)^2 T} }}}\left( \frac{ {\varphi_{2k}^{\left( 3 \right)}  - \psi _{2k}^{\left( 3 \right)} }}{\left( {k + \frac{1}{2}} \right)^3} \right) \cos \left( {k + \frac{1}{2}} \right)x} ,
\end{eqnarray*}
\begin{eqnarray*}
f\left( x
\right) = -\varphi'' \left( x \right) &+&\varepsilon\varphi'' \left( -x \right) +\sum\limits_{k = 1}^\infty  {\frac{1 + \varepsilon}{k} \left(\frac{\varphi _{1k}^{\left( 3 \right)}  - \psi
_{1k}^{\left(3 \right)} }{{{1 - e^{ - \left( {1+ \varepsilon } \right)k^2 T} }}} \right)\sin kx} \\
&-& \sum\limits_{k = 0}^\infty {\frac{\left( {1 - \varepsilon } \right)}{\left( {k + \frac{1}{2}} \right)} \left(\frac{{{\varphi
_{2k}^{\left( 3 \right)}  - \psi _{2k}^{\left( 3 \right)} }
}}{{{1 - e^{ - \left( {1 - \varepsilon }
\right)\left( {k + \frac{1}{2}} \right)^2 T} } }}\right)\cos \left( {k + \frac{1}{2}} \right)x},
\end{eqnarray*}
where
$$\varphi _{1k} ^{(3)} =  \frac{1}{\pi}\int\limits_{ - \pi }^\pi  {\varphi''' \left( x \right)\cos kx \, dx} , \quad \varphi
_{2k}^{(3)}  =  \frac{1}{\pi}\int\limits_{ - \pi }^\pi  {\varphi''' \left( x \right)\sin \left( {k + \frac{1}{2}} \right) x  \, dx},$$
$$\psi _{1k}^{(3)}  =  \frac{1}{\pi}\int\limits_{ - \pi }^\pi  {\psi''' \left( x \right)\cos kx \,  dx}, \quad \psi _{2k}^{(3)} =
 \frac{1}{\pi}\int\limits_{ - \pi }^\pi  {\psi''' \left( x \right)\sin \left( {k + \frac{1}{2}} \right) x \, dx}.$$
\end{theorem}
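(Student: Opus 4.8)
The plan is to combine the three ingredients developed in Section 3 — the formal series construction, the convergence analysis, and the uniqueness argument — into one proof. First I would expand the sought pair in the complete orthogonal system (\ref{basis IP1}) provided by Lemma \ref{l1}, writing $u(x,t)$ and $f(x)$ as in (\ref{6.1})--(\ref{6.2}); substituting into (\ref{2.1}) and using orthogonality yields for each $k$ the scalar first-order linear ODEs (\ref{u1k_eqn})--(\ref{u2k_eqn}) for $u_{1k}(t)$, $u_{2k}(t)$ with the constant inhomogeneities $f_{1k}$, $f_{2k}$. Solving them introduces integration constants $C_{1k}$, $C_{2k}$; imposing the two overdetermination conditions (\ref{2.2}) at $t=0$ and $t=T$ gives for each $k$ a $2\times2$ linear system, whose solution furnishes the closed forms of $C_{ik}$ and $f_{ik}$ in terms of the Fourier coefficients $\varphi_{ik}$, $\psi_{ik}$. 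Here one notes that the relevant determinants $1-e^{-(1-\varepsilon)(k+\frac12)^2T}$ and $1-e^{-(1+\varepsilon)k^2T}$ never vanish and are bounded below by a positive constant uniformly in $k$: since $0<|\varepsilon|<1$ the exponents are strictly negative for all admissible $k$, using $(k+\tfrac12)^2\ge\tfrac14$ in the first family so that the $k=0$ term is harmless.

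The core of the argument is the convergence estimate. Using $\varphi,\psi\in C^2[-\pi,\pi]$, $\varphi''',\psi'''\in L_2(-\pi,\pi)$ and the compatibility conditions $\varphi^{(i)}(\pm\pi)=\psi^{(i)}(\pm\pi)=0$, $i=0,2$, I would integrate by parts three times in the integrals defining $\varphi_{ik}$, $\psi_{ik}$: the boundary terms either vanish because the frequency $k$ or $k+\tfrac12$ makes the relevant cosine/sine factor zero at $\pm\pi$, or are annihilated exactly by the $i=0,2$ conditions, which rewrites $C_{ik}$ and $f_{ik}$ through the third-derivative coefficients $\varphi^{(3)}_{ik}$, $\psi^{(3)}_{ik}$ and gains a factor $k^{-3}$. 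With the uniform lower bound on the exponential denominators this produces a majorant series with general term $\bigl(|\varphi^{(3)}_{ik}|+|\psi^{(3)}_{ik}|\bigr)k^{-3}$ for $u$, and, after $2ab\le a^2+b^2$, a term $|\varphi^{(3)}_{ik}|^2+|\psi^{(3)}_{ik}|^2+k^{-2}$ for $f$; by Bessel's inequality $\sum_k|\varphi^{(3)}_{ik}|^2\le C\|\varphi'''\|^2_{L_2(-\pi,\pi)}$ and likewise for $\psi$, so the Weierstrass M-test gives absolute and uniform convergence of the series defining $u$ on $\bar\Omega$ and $f$ on $[-\pi,\pi]$. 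Applying the same scheme to the series obtained by term-by-term differentiation — where the two powers of $k$ produced by $u_{xx}$ and $u_t$ are dominated by the three gained from the integrations by parts — shows $u\in C^{2,1}_{x,t}(\Omega)$ and $f\in C[-\pi,\pi]$, so the constructed pair is a genuine regular solution with the stated representation.

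For uniqueness I would set $u=u_1-u_2$, $f=f_1-f_2$ for two regular solutions; the difference satisfies (\ref{2.1}), the Dirichlet conditions (\ref{2.3}) and the homogeneous conditions (\ref{H_IC}). Introducing the Fourier coefficients $u_{ik}(t)$, $f_{ik}$ as in (\ref{u1k})--(\ref{f2k}), differentiating under the integral sign and integrating by parts twice with the help of (\ref{2.3}) shows each $u_{ik}$ solves the same linear ODE as in the existence step but now subject to $u_{ik}(0)=u_{ik}(T)=0$, which forces $f_{ik}=0$ and $u_{ik}\equiv0$ for all $k$. By the completeness half of Lemma \ref{l1}, $u\equiv0$ and $f\equiv0$, so the two solutions coincide and the displayed formulas are those found in the construction.

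I expect the main obstacle to be the bookkeeping in the convergence step: one must check that after three integrations by parts every boundary term is accounted for — some disappearing by virtue of the half-integer or integer frequency, the rest killed exactly by the $i=0,2$ assumptions — and that the differentiated series for $u_{xx}$ and $u_t$ still converge uniformly, i.e. that the two powers of $k$ lost to differentiation are strictly dominated by the three gained. A minor additional point is the isolated $k=0$ term of the $\cos(k+\tfrac12)x$ family, for which one verifies separately that $1-e^{-(1-\varepsilon)T/4}>0$.
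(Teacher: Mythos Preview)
Your proposal is correct and follows essentially the same route as the paper: formal expansion in the eigenfunction system (\ref{basis IP1}), solution of the resulting scalar ODEs with the overdetermination data, three integrations by parts under the compatibility conditions $\varphi^{(i)}(\pm\pi)=\psi^{(i)}(\pm\pi)=0$, $i=0,2$, followed by Bessel's inequality and the Weierstrass M-test for convergence, and the Fourier-coefficient argument for uniqueness. If anything, your sketch is slightly more explicit than the paper's in singling out the uniform positive lower bound on the denominators $1-e^{-\lambda_{ik}T}$ and in tracking which boundary terms vanish by frequency versus by hypothesis.
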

\begin{theorem}\label{th2} Let $\varphi \left( x \right),\psi \left( x \right) \in C^2 \left[ { - \pi ,\pi } \right]$, $\varphi'''(x)$, $\psi'''(x) \in L_2(-\pi, \pi)$  and $\varphi' \left( { \pm \pi }\right) = \psi '\left( { \pm \pi } \right) =
0.$ Then a unique solution to the Inverse Problem IP2 exists and it can be written in the form
\begin{eqnarray*}
u\left( {x,t} \right) =
\varphi \left( x \right) &+&\frac{t}{T}(\psi_0-\varphi_0)+ \sum\limits_{k = 1}^\infty {\frac{{\left( {1 - e^{ - \left( {1 - \varepsilon } \right)k^2 t}
} \right)\left( {\psi _{2k}^{\left( 3 \right)}
- \varphi _{2k}^{\left( 3 \right)} } \right)}}{{\left( {1 - e^{ - \left( {1 - \varepsilon }\right)k^2 T} } \right)k^3 }}\cos kx} \\
&-&\sum\limits_{k = 0}^\infty  {\frac{{\left( {1 - e^{ - \left( {1 +
\varepsilon } \right)\left( {k + \frac{1}{2}} \right)^2 t} }
\right)\left({\psi _{1k}^{\left( 3 \right)}  - \varphi _{1k}^{\left( 3 \right)}} \right) }}{{\left( {1 - e^{ -\left( {1 + \varepsilon } \right)\left( {k + \frac{1}{2}}\right)^2 T} } \right)\left( {k + \frac{1}{2}} \right)^3 }}\sin \left( {k + \frac{1}{2}} \right)x} ,
\end{eqnarray*}
\begin{eqnarray*}
f\left( x \right) = -\varphi'' \left( x \right) &+& \varepsilon \varphi'' \left( -x \right)+ \frac{\psi_0-\varphi_0}{T} -\sum\limits_{k = 1}^\infty  {\frac{{\left( {1 - \varepsilon } \right)\left( {\varphi _{2k}^{\left( 3 \right)}  - \psi
_{2k}^{\left( 3 \right)} } \right)}}{{k \left( {1 - e^{ - \left( {1
- \varepsilon } \right)k^2 T} } \right) }}\cos kx} \\
&+&
\sum\limits_{k = 0}^\infty  {\frac{{\left( {1 + \varepsilon }
\right)\left( {\varphi _{1k}^{\left( 3 \right)}  - \psi
_{1k}^{\left( 3 \right)} } \right)}}{{\left( {k + \frac{1}{2}} \right)\left( {1 - e^{ - \left( {1+ \varepsilon } \right)\left( {k + \frac{1}{2}} \right)^2 T} }\right) }}\sin \left( {k +\frac{1}{2}} \right)x} ,
\end{eqnarray*}
where
$$\varphi _{0}  =
\frac{1}{2\pi}\int\limits_{ - \pi }^\pi  {\varphi \left( x \right)dx}, \quad\psi _{0}  =
\frac{1}{2\pi}\int\limits_{ - \pi }^\pi  {\psi \left( x \right)dx},$$
$$ \varphi _{1k}^{(3)}  = \frac{1}{\pi}\int\limits_{ -
\pi }^\pi  {\varphi''' \left( x \right)\cos \left( {k +
\frac{1}{2}} \right)x \,dx}, \quad \varphi _{2k}^{(3)} =
\frac{1}{\pi}\int\limits_{ - \pi }^\pi {\varphi''' \left( x \right)\sin kx \, dx},$$
 $$\psi _{1k}^{(3)}  =  \frac{1}{\pi}\int\limits_{ - \pi }^\pi  {\psi'''\left( x \right)\cos \left( {k +\frac{1}{2}} \right)x\, dx}, \quad \psi _{2k}^{(3)}  =
 \frac{1}{\pi}\int\limits_{ - \pi }^\pi  {\psi''' \left( x \right)\sin kx \,dx}.$$
\end{theorem}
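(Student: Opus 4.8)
The plan is to repeat, for the Neumann basis, the three-step scheme (formal series construction, convergence, uniqueness) carried out in detail for IP1 in Section~3. First I would expand
$$u(x,t) = u_0(t) + \sum_{k=1}^{\infty} u_{1k}(t)\cos kx + \sum_{k=0}^{\infty} u_{2k}(t)\sin\Big(k+\tfrac{1}{2}\Big)x,\qquad f(x) = f_0 + \sum_{k=1}^{\infty} f_{1k}\cos kx + \sum_{k=0}^{\infty} f_{2k}\sin\Big(k+\tfrac{1}{2}\Big)x,$$
and substitute into (\ref{2.1}). Since $\cos kx$ is even and $\sin(k+\tfrac12)x$ is odd, the involution term $\varepsilon u_{xx}(-x,t)$ acts diagonally on these functions, and matching coefficients gives the decoupled ODEs $u_0'(t)=f_0$, $u_{1k}'(t)+(1-\varepsilon)k^2 u_{1k}(t)=f_{1k}$ and $u_{2k}'(t)+(1+\varepsilon)(k+\tfrac12)^2 u_{2k}(t)=f_{2k}$, the coefficients being precisely the eigenvalues (8.a).

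Next I would solve these equations and impose the overdetermination data (\ref{2.2}) mode by mode: $u_0(0)=\varphi_0$, $u_0(T)=\psi_0$ and $u_{ik}(0)=\varphi_{ik}$, $u_{ik}(T)=\psi_{ik}$, where $\varphi_{ik},\psi_{ik}$ are the Fourier coefficients of $\varphi,\psi$ in the basis (8.b) and $\varphi_0,\psi_0$ their mean values. The zero mode needs separate treatment because its eigenvalue vanishes: from $u_0(t)=f_0 t + C_0$ one finds $C_0=\varphi_0$ and $f_0=(\psi_0-\varphi_0)/T$, which produces the isolated terms $\tfrac{t}{T}(\psi_0-\varphi_0)$ in $u$ and $\tfrac{\psi_0-\varphi_0}{T}$ in $f$. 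For $k\ge 1$ each $2\times2$ linear system is solved exactly as in IP1, the denominators $1-e^{-(1-\varepsilon)k^2T}$ and $1-e^{-(1+\varepsilon)(k+\frac12)^2T}$ being nonzero because $|\varepsilon|<1$; substituting $u_{ik}$ and $f_{ik}$ back and splitting off $-\varphi''(x)+\varepsilon\varphi''(-x)$ in the expression for $f$ then yields the two stated formulas.

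For convergence I would integrate each Fourier coefficient of $\varphi$ and $\psi$ by parts three times. The boundary contributions at $x=\pm\pi$ either vanish automatically (because $\cos(k+\tfrac12)\pi=0$ and $\sin k\pi=0$) or reduce to multiples of $\varphi'(\pi)\pm\varphi'(-\pi)$ and $\psi'(\pi)\pm\psi'(-\pi)$, all of which vanish under the hypothesis $\varphi'(\pm\pi)=\psi'(\pm\pi)=0$; this explains why only first-order compatibility is required here, in contrast to the conditions $\varphi^{(i)}(\pm\pi)=0$, $i=0,2$, needed for IP1. One obtains $\varphi_{1k}=\varphi_{2k}^{(3)}/k^3$ and $\varphi_{2k}=-\varphi_{1k}^{(3)}/(k+\tfrac12)^3$ (and likewise for $\psi$), and, bounding $|1-e^{-\mu t}|/|1-e^{-\mu T}|$ by a constant $c$ uniformly in $t\in[0,T]$ and $k$, the series for $u$ is dominated by $c\sum_k(|\varphi_{1k}^{(3)}|+|\psi_{1k}^{(3)}|)(k+\tfrac12)^{-3}+c\sum_k(|\varphi_{2k}^{(3)}|+|\psi_{2k}^{(3)}|)k^{-3}$, while the series for $f$, $u_{xx}$ and $u_t$ are dominated by sums of terms $|\varphi_{ik}^{(3)}|^2+|\psi_{ik}^{(3)}|^2+O(k^{-2})$ after using $2ab\le a^2+b^2$. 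Bessel's inequality for $\varphi''',\psi'''\in L_2(-\pi,\pi)$ makes all of these finite, so the Weierstrass $M$-test gives absolute and uniform convergence on $\Omega$ and legitimizes the term-by-term differentiation.

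Uniqueness then copies the IP1 argument: for the difference of two solution pairs the functions $u_0(t),u_{1k}(t),u_{2k}(t)$ satisfy the homogeneous versions of the ODEs above with $u(0)=u(T)=0$, which forces $f_0=f_{1k}=f_{2k}=0$ and $u_0\equiv u_{1k}\equiv u_{2k}\equiv 0$; completeness of (8.b) in $L_2(-\pi,\pi)$, established in Lemma~\ref{l1}, then yields $u\equiv 0$ and $f\equiv 0$ on $\overline{\Omega}$. I expect the only genuine obstacle to be the bookkeeping in the triple integration by parts — verifying that the mixed system $\{1,\cos kx,\sin(k+\tfrac12)x\}$ produces exactly the boundary terms killed by $\varphi'(\pm\pi)=\psi'(\pm\pi)=0$, and keeping track of the signs and of the index swap $\varphi_{1k}\leftrightarrow\varphi_{2k}^{(3)}$ so that the coefficients emerge in the precise form of the theorem; together with the separate handling of the zero mode, all remaining steps transcribe directly from the IP1 proof.
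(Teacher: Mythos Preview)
Your proposal is correct and follows exactly the route the paper itself takes: the paper proves IP1 in detail and then states that IP2--IP4 are handled ``similarly,'' and your write-up spells out precisely what ``similarly'' means for the Neumann basis (8.b), including the separate treatment of the zero mode (which produces the $\tfrac{t}{T}(\psi_0-\varphi_0)$ and $\tfrac{\psi_0-\varphi_0}{T}$ terms) and the observation that the triple integration by parts now requires only $\varphi'(\pm\pi)=\psi'(\pm\pi)=0$ because the remaining boundary evaluations hit $\sin k\pi$ or $\cos(k+\tfrac12)\pi$. There is nothing to add; the bookkeeping caveats you flag are the only places to be careful.
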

\begin{theorem}\label{th3} Let $\varphi \left( x \right),\psi \left( x \right) \in C^2 \left[ { - \pi ,\pi } \right]$, $\varphi'''(x)$, $\psi'''(x) \in L_2(-\pi, \pi)$ and $\varphi ^{\left( i \right)} \left( { - \pi } \right) = \varphi ^{\left( i \right)} \left( \pi \right),$ $\psi ^{\left( i \right)}
\left( { - \pi } \right) = \psi ^{\left( i \right)} \left( \pi \right),$ $i = 0,1,2.$ Then, a unique solution to the Inverse Problem IP3
exists and it can be written in the form
\begin{eqnarray*}
u\left({x,t} \right) = \varphi \left( x \right) &+& \frac{t}{T}(\psi_0-\varphi_0) - \sum\limits_{k =
1}^\infty  {\frac{{\left( {1 - e^{ - \left( {1 - \varepsilon }
\right)k^2 t} } \right) \left( {\varphi
_{2k}^{\left( 3 \right)}  - \psi _{2k}^{\left( 3 \right)} }
\right)}}{{\left( {1 - e^{ - \left( {1 -
\varepsilon } \right)k^2 T} } \right)k^3 }} \cos kx } \\
&+& \sum\limits_{k = 1}^\infty {\frac{{\left( {1 - e^{
- \left( {1 + \varepsilon } \right)k^2 t} } \right)\left( {\varphi _{1k}^{\left( 3 \right)} - \psi
_{1k}^{\left( 3 \right)} } \right)}}{{\left( {1 - e^{ - \left( {1 + \varepsilon } \right)k^2 T} }\right)k^3 }}\sin kx},
\end{eqnarray*}
\begin{eqnarray*}
f\left( x \right) =
-\varphi'' \left( x \right) +\varepsilon \varphi'' \left( -x \right)&+&  \frac{\psi_0-\varphi_0}{T}
-\sum\limits_{k = 1}^\infty
{\frac{{\left( {1 - \varepsilon } \right)\left( {\varphi
_{2k}^{\left( 3 \right)}  - \psi _{2k}^{\left( 3 \right)} }
\right)}}{{\left( {1 - e^{ - \left( {1 - \varepsilon } \right)k^2
T} } \right)k }}\cos kx} \\
&+&\sum\limits_{k = 1}^\infty
{\frac{{\left( {1 + \varepsilon } \right)\left( {\varphi
_{1k}^{\left( 3 \right)}  - \psi _{1k}^{\left( 3 \right)} }
\right)}}{{\left( {1 - e^{ - \left( {1 + \varepsilon } \right)k^2
T} } \right)k }}\sin kx},
\end{eqnarray*}
where
$$\varphi _{0}  =
\frac{1}{2\pi}\int\limits_{ - \pi }^\pi  {\varphi \left( x \right)dx}, \quad \psi _{0}  =
\frac{1}{2\pi}\int\limits_{ - \pi }^\pi  {\psi \left( x \right)dx},$$
$$\varphi _{1k}^{(3)}  = \frac{1}{\pi}\int\limits_{ - \pi }^\pi  {\varphi''' \left( x \right)\cos kx \,dx}, \quad \varphi _{2k}^{(3)}  = \frac{1}{\pi}\int\limits_{ - \pi }^\pi  {\varphi''' \left( x \right)\sin kx \,dx},$$
$$\psi _{1k}^{(3)}  = \frac{1}{\pi}\int\limits_{ - \pi }^\pi {\psi''' \left( x \right)\cos kx \,dx},\quad \psi _{2k}^{(3)}  = \frac{1}{\pi}\int\limits_{ -\pi }^\pi  {\psi''' \left( x \right)\sin kx\,dx} .$$
\end{theorem}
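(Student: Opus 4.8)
The plan is to transcribe the argument already given for IP1, using the basis \eqref{basis IP3} in place of \eqref{basis IP1}, and to keep track of one genuinely new feature: the constant eigenfunction $X_0 \equiv 1$ carries the eigenvalue $0$. I would expand the unknowns as $u(x,t) = u_0(t) + \sum_{k=1}^{\infty} u_{1k}(t)\cos kx + \sum_{k=1}^{\infty} u_{2k}(t)\sin kx$ and $f(x) = f_0 + \sum_{k=1}^{\infty} f_{1k}\cos kx + \sum_{k=1}^{\infty} f_{2k}\sin kx$, substitute into \eqref{2.1}, and use $\cos k(-x) = \cos kx$, $\sin k(-x) = -\sin kx$ together with \eqref{eigenvalue IP3} to obtain, for $k \ge 1$, the first-order equations $u_{1k}'(t) + (1-\varepsilon)k^2 u_{1k}(t) = f_{1k}$ and $u_{2k}'(t) + (1+\varepsilon)k^2 u_{2k}(t) = f_{2k}$, which are solved exactly as for IP1. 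For the mode $k=0$ the equation degenerates to $u_0'(t) = f_0$; imposing $u_0(0) = \varphi_0$ and $u_0(T) = \psi_0$ from \eqref{2.2} forces $u_0(t) = \varphi_0 + \tfrac{t}{T}(\psi_0 - \varphi_0)$ and $f_0 = \tfrac{\psi_0 - \varphi_0}{T}$, which is the source of the extra linear-in-$t$ term in $u$ and the constant in $f$. Applying the two conditions of \eqref{2.2} to the $k \ge 1$ modes gives a $2\times 2$ linear system for each of $(C_{1k}, f_{1k})$ and $(C_{2k}, f_{2k})$ whose solution, substituted back, reproduces the stated series.

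Second, I would justify the formal series. Write $\varphi_0, \varphi_{1k}, \varphi_{2k}$ (and similarly for $\psi$) for the Fourier coefficients of $\varphi$ in \eqref{basis IP3} and integrate by parts three times. The hypotheses $\varphi^{(i)}(-\pi) = \varphi^{(i)}(\pi)$ and $\psi^{(i)}(-\pi) = \psi^{(i)}(\pi)$ for $i = 0,1,2$ make all boundary terms vanish — here it is the equality of the values at $\pm\pi$, not their vanishing, that is needed, since $\cos kx$ and $\sin kx$ are $2\pi$-periodic — and, because each differentiation interchanges $\cos kx$ and $\sin kx$, this expresses $\varphi_{1k} - \psi_{1k}$ and $\varphi_{2k} - \psi_{2k}$ as $\pm(\varphi_{ik}^{(3)} - \psi_{ik}^{(3)})/k^3$ with the sine and cosine indices swapped, which is why $\varphi_{2k}^{(3)}$ accompanies $\cos kx$ and $\varphi_{1k}^{(3)}$ accompanies $\sin kx$ in the final formulas. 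Since $|\varepsilon| < 1$, one has $1 - e^{-(1\pm\varepsilon)k^2 T} \ge 1 - e^{-(1-|\varepsilon|)T} > 0$ uniformly in $k \ge 1$, so the denominators are bounded below by a positive constant. The resulting termwise bounds for $u$, for $u_t$ and $u_{xx}$ (obtained by term-wise differentiation of the series for $u$), and for $f$ then reduce, via $2ab \le a^2 + b^2$ and Bessel's inequality $\sum_k |\varphi_{ik}^{(3)}|^2 \le C\|\varphi'''\|_{L_2(-\pi,\pi)}^2$, to convergent numerical series, so the Weierstrass $M$-test yields absolute and uniform convergence on $\overline{\Omega}$; the leading terms $-\varphi''(x) + \varepsilon\varphi''(-x)$ of $f$ are continuous because $\varphi \in C^2[-\pi,\pi]$.

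Third, for uniqueness I would set $u = u_1 - u_2$, $f = f_1 - f_2$ for two solution pairs; then $u, f$ satisfy \eqref{2.1}, the periodic conditions \eqref{2.5}, and $u(x,0) = u(x,T) = 0$. Defining $u_0(t), u_{1k}(t), u_{2k}(t)$ and $f_0, f_{1k}, f_{2k}$ as the Fourier coefficients of $u(\cdot,t)$ and $f$ in \eqref{basis IP3}, differentiating $u_{1k}(t) = \tfrac{1}{\pi}\int_{-\pi}^{\pi} u(x,t)\cos kx\,dx$ in $t$, substituting \eqref{2.1}, and integrating by parts twice — the periodicity of $u$ and of $u_x$ from \eqref{2.5} kills the boundary terms — gives $u_{1k}'(t) = (\varepsilon - 1)k^2 u_{1k}(t) + f_{1k}$; likewise $u_{2k}'(t) = -(\varepsilon + 1)k^2 u_{2k}(t) + f_{2k}$; and for $k = 0$, since $\int_{-\pi}^{\pi} u_{xx}(x,t)\,dx = u_x(\pi,t) - u_x(-\pi,t) = 0$ (and likewise for the involution term), $u_0'(t) = f_0$. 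In each case the conditions $u_{ik}(0) = u_{ik}(T) = 0$ (and $u_0(0) = u_0(T) = 0$) force the $f$-coefficient to be zero and the time-dependent coefficient to vanish identically; by completeness of \eqref{basis IP3} in $L_2(-\pi,\pi)$ (Lemma \ref{l1}), $u \equiv 0$ and $f \equiv 0$ on $\overline{\Omega}$.

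The main obstacle — and the only real departure from the IP1 proof — is the zero eigenvalue of the constant mode: the spectral problem \eqref{SP IP3} has $\lambda = 0$ in its spectrum (unlike \eqref{SP IP1} and \eqref{SP IP4}), so the $k = 0$ coefficient equation is not of decaying-exponential type and must be integrated separately, both in the construction (where it produces the $\tfrac{t}{T}(\psi_0 - \varphi_0)$ term and requires no compatibility condition) and in the uniqueness step (where $u_0' = f_0$ with $u_0(0) = u_0(T) = 0$ still forces $f_0 = 0$). Everything else is a routine substitution of $k$ for $(k+\tfrac12)$ and of periodic boundary terms for Dirichlet ones.
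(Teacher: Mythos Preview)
Your proposal is correct and follows essentially the same approach as the paper, which does not give a separate proof of Theorem~\ref{th3} but explicitly states that existence, convergence, and uniqueness for IP3 are proved ``similarly'' to IP1 using the basis \eqref{basis IP3}. You have also correctly isolated and handled the one genuinely new ingredient relative to IP1, namely the zero eigenvalue carried by the constant eigenfunction $X_0\equiv 1$, which forces the $k=0$ mode to be integrated separately and produces the $\tfrac{t}{T}(\psi_0-\varphi_0)$ and $\tfrac{\psi_0-\varphi_0}{T}$ terms; your treatment of the periodic boundary terms in the integration-by-parts step and the resulting index swap between $\varphi_{1k}^{(3)}$ and $\varphi_{2k}^{(3)}$ is likewise accurate.
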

\begin{theorem}\label{th4} Let $\varphi \left( x \right),\psi \left( x \right) \in C^2 \left[ { - \pi ,\pi } \right]$, $\varphi'''(x)$, $\psi'''(x) \in L_2(-\pi, \pi)$ and $\varphi
^{\left( i \right)} \left( { - \pi } \right) =  - \varphi ^{\left(
i \right)} \left( \pi  \right),$ $\psi ^{\left( i \right)} \left(
{ - \pi } \right) =  - \psi ^{\left( i \right)} \left( \pi
\right),$ $i = 0,1,2.$ Then, a unique solution to the Inverse Problem IP4
exists and it can be written in the form
\begin{eqnarray*}
u\left(
{x,t} \right) = \varphi \left( x \right) &-& \sum\limits_{k =
0}^\infty {\frac{{\left( {1 - e^{ - \left( {1 - \varepsilon }
\right)\left( {k + \frac{1}{2}} \right)^2 t} } \right) \left( {\varphi
_{2k}^{\left( 3 \right)} - \psi _{2k}^{\left( 3 \right)} }
\right)}}{{\left( {1 - e^{ - \left( {1 -
\varepsilon } \right)\left( {k + \frac{1}{2}} \right)^2 T} }
\right)\left( {k + \frac{1}{2}} \right)^3 }}\cos \left(
{k + \frac{1}{2}} \right)x}\\
&+& \sum\limits_{k = 0}^\infty  {\frac{{\left( {1
- e^{ - \left( {1 + \varepsilon } \right)\left( {k + \frac{1}{2}}
\right)^2 t} } \right) \left( {\varphi _{1k}^{\left( 3 \right)}  -
\psi _{1k}^{\left( 3 \right)} } \right)}}{{\left( {1 - e^{ - \left( {1 + \varepsilon }
\right)\left( {k + \frac{1}{2}} \right)^2 T} } \right)\left( {k +
\frac{1}{2}} \right)^3 }}\sin \left( {k + \frac{1}{2}}
\right)x},
\end{eqnarray*}

\begin{eqnarray*}
f\left( x
\right) = -\varphi'' \left( x \right) + \varepsilon \varphi'' \left( -x \right) &-& \sum\limits_{k = 0}^\infty {\frac{{\left( {1 - \varepsilon } \right)\left( {\varphi
_{2k}^{\left( 3 \right)}  - \psi _{2k}^{\left( 3 \right)} }
\right)}}{{\left( {k + \frac{1}{2}} \right)\left( {1 - e^{ - \left( {1 - \varepsilon }
\right)\left( {k + \frac{1}{2}} \right)^2 T} } \right)}}\cos \left( {k + \frac{1}{2}} \right)x} \\
&+& \sum\limits_{k = 0}^\infty  {\frac{{\left( {1 + \varepsilon }
\right)\left( {\varphi _{1k}^{\left( 3 \right)}  - \psi
_{1k}^{\left( 3 \right)} } \right)}}{{\left( {k + \frac{1}{2}} \right)\left( {1 - e^{ - \left( {1
+ \varepsilon } \right)\left( {k + \frac{1}{2}} \right)^2 T} }
\right)}}\sin \left( {k +
\frac{1}{2}} \right)x} ,
\end{eqnarray*}
where
$$\varphi _{1k}^{(2)}  = \frac{1}{\pi}\int\limits_{ -
\pi }^\pi  {\varphi''' \left( x \right)\cos \left( {k + \frac{1}{2}}
\right)x \,dx}, \quad \varphi _{2k}^{(2)}  = \frac{1}{\pi}\int\limits_{ - \pi }^\pi
{\varphi''' \left( x \right)\sin \left( {k + \frac{1}{2}} \right)x\,dx},$$
$$\psi _{1k}^{(2)}  = \frac{1}{\pi}\int\limits_{ - \pi }^\pi  {\psi''' \left( x\right)\cos \left( {k + \frac{1}{2}} \right)x\,dx},\quad \psi _{2k}^{(2)} =
\frac{1}{\pi}\int\limits_{ - \pi }^\pi  {\psi''' \left( x \right)\sin \left( {k +\frac{1}{2}} \right)x\,dx}.$$
\end{theorem}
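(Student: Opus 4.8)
The plan is to repeat, step by step, the argument carried out for Theorem~\ref{th1}, now with the orthogonal system $X_{1k}=\sin\bigl(k+\tfrac12\bigr)x$, $X_{2k}=\cos\bigl(k+\tfrac12\bigr)x$, $k\in\mathbb{N}\cup\{0\}$, of (\ref{basis IP4}), which is complete in $L_2(-\pi,\pi)$ by Lemma~\ref{l1}. First I would seek
\[
u(x,t)=\sum_{k=0}^{\infty}u_{1k}(t)\sin\bigl(k+\tfrac12\bigr)x+\sum_{k=0}^{\infty}u_{2k}(t)\cos\bigl(k+\tfrac12\bigr)x,
\]
and
\[
f(x)=\sum_{k=0}^{\infty}f_{1k}\sin\bigl(k+\tfrac12\bigr)x+\sum_{k=0}^{\infty}f_{2k}\cos\bigl(k+\tfrac12\bigr)x,
\]
and substitute into (\ref{2.1}). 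Since $\sin\bigl(k+\tfrac12\bigr)(-x)=-\sin\bigl(k+\tfrac12\bigr)x$ whereas $\cos\bigl(k+\tfrac12\bigr)(-x)=\cos\bigl(k+\tfrac12\bigr)x$, the involution term multiplies the sine coefficients by $-\varepsilon$ and the cosine coefficients by $+\varepsilon$; matching coefficients then decouples the equation into
\[
u'_{1k}(t)+(1+\varepsilon)\bigl(k+\tfrac12\bigr)^{2}u_{1k}(t)=f_{1k},\qquad u'_{2k}(t)+(1-\varepsilon)\bigl(k+\tfrac12\bigr)^{2}u_{2k}(t)=f_{2k},
\]
whose coefficients are exactly the eigenvalues in (\ref{eigenvalue IP4}). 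Here, in contrast with IP2 and IP3, none of these eigenvalues vanishes (there is no constant eigenfunction among the $X_{ik}$), which is why the representation for IP4 carries no term proportional to $\tfrac tT(\psi_0-\varphi_0)$.

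Next I would solve these first-order linear ODEs, getting $u_{ik}(t)=f_{ik}/\lambda_{ik}+C_{ik}e^{-\lambda_{ik}t}$ with $\lambda_{1k}=(1+\varepsilon)\bigl(k+\tfrac12\bigr)^{2}$ and $\lambda_{2k}=(1-\varepsilon)\bigl(k+\tfrac12\bigr)^{2}$, and impose the two overdetermination conditions (\ref{2.2}): subtracting the resulting relations at $t=0$ and $t=T$ gives $C_{ik}=(\varphi_{ik}-\psi_{ik})/(1-e^{-\lambda_{ik}T})$ and then $f_{ik}=\lambda_{ik}(\varphi_{ik}-C_{ik})$, where $\varphi_{ik},\psi_{ik}$ are the Fourier coefficients of $\varphi,\psi$ in the system (\ref{basis IP4}); since $|\varepsilon|<1$ one has $\lambda_{ik}\ge(1-|\varepsilon|)/4>0$, so each denominator $1-e^{-\lambda_{ik}T}$ is bounded away from zero. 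Substituting the coefficients back, the identity $\varphi(x)=\sum_{k}\varphi_{1k}\sin\bigl(k+\tfrac12\bigr)x+\sum_{k}\varphi_{2k}\cos\bigl(k+\tfrac12\bigr)x$ collapses the $u$-series to $\varphi(x)$ plus two correction sums, while recognizing $\sum_{k}\lambda_{1k}\varphi_{1k}\sin\bigl(k+\tfrac12\bigr)x+\sum_{k}\lambda_{2k}\varphi_{2k}\cos\bigl(k+\tfrac12\bigr)x$ as the Fourier expansion of $-\varphi''(x)+\varepsilon\varphi''(-x)$ — legitimate since $\varphi$ and its first two derivatives inherit the anti-periodic conditions — extracts that closed-form term from the $f$-series. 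The decisive step is then a threefold integration by parts that turns $\varphi_{ik},\psi_{ik}$ into their third-derivative analogues $\pm\varphi_{ik}^{(3)}/\bigl(k+\tfrac12\bigr)^{3}$, $\pm\psi_{ik}^{(3)}/\bigl(k+\tfrac12\bigr)^{3}$: here $\cos\bigl(k+\tfrac12\bigr)\pi=0$ annihilates two of the three boundary contributions automatically, while the remaining one is proportional to a sum $\varphi^{(i)}(\pi)+\varphi^{(i)}(-\pi)$ or $\psi^{(i)}(\pi)+\psi^{(i)}(-\pi)$ with $0\le i\le 2$, and these vanish precisely because of the hypotheses $\varphi^{(i)}(-\pi)=-\varphi^{(i)}(\pi)$, $\psi^{(i)}(-\pi)=-\psi^{(i)}(\pi)$. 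This brings $u$ and $f$ into the stated form and gives each term of the $u$-series the decay $\bigl(k+\tfrac12\bigr)^{-3}$ and each term of the $f$-series the decay $\bigl(k+\tfrac12\bigr)^{-1}$, with $\varphi_{ik}^{(3)},\psi_{ik}^{(3)}$ in the numerators.

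Convergence I would then verify exactly as for IP1: the factors $(1-e^{-\lambda_{ik}t})/(1-e^{-\lambda_{ik}T})$ are uniformly bounded on $[0,T]$, so the $u$-series is dominated by $c\sum_{k}\bigl(|\varphi_{ik}^{(3)}|+|\psi_{ik}^{(3)}|\bigr)\bigl(k+\tfrac12\bigr)^{-3}$, which converges absolutely and uniformly on $\overline{\Omega}$ by the Cauchy--Schwarz inequality, Bessel's inequality (applicable since $\varphi''',\psi'''\in L_2(-\pi,\pi)$), and the Weierstrass $M$-test; the series for $u_{xx}$ and $u_t$ obtained by termwise differentiation have terms of order $\bigl(k+\tfrac12\bigr)^{-1}$ and are handled the same way, and the $f$-series is controlled with $2ab\le a^2+b^2$ as in the IP1 estimate, together with $\varphi,\psi\in C^{2}[-\pi,\pi]$ to guarantee $f\in C[-\pi,\pi]$. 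For uniqueness I would take the difference $(u,f)$ of two solutions, note that it satisfies (\ref{2.1}), (\ref{2.6}) and the homogeneous data $u(x,0)=u(x,T)=0$, project it onto $X_{1k}$ and $X_{2k}$ and integrate by parts to get $u'_{ik}+\lambda_{ik}u_{ik}=f_{ik}$ together with $u_{ik}(0)=u_{ik}(T)=0$; this forces $f_{ik}=0$ and $u_{ik}\equiv0$ for every $k$, and completeness of (\ref{basis IP4}) (Lemma~\ref{l1}) then yields $f\equiv0$, $u\equiv0$ on $\overline{\Omega}$. The main obstacle I anticipate is not conceptual but bookkeeping: keeping the signs straight through the three integrations by parts and the $\pm\varepsilon$ splitting, and checking carefully that it is genuinely the anti-periodic conditions — not merely the vanishing of $\cos\bigl(k+\tfrac12\bigr)\pi$ — that kills the boundary terms carrying the nonzero factor $\sin\bigl(k+\tfrac12\bigr)\pi=(-1)^{k}$.
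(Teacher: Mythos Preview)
Your proposal is correct and follows essentially the same approach as the paper: the paper proves Theorem~\ref{th1} in full and states that Theorems~\ref{th2}--\ref{th4} are proved ``in a similar way,'' and you have carried out precisely that adaptation for IP4 using the basis~(\ref{basis IP4}) and eigenvalues~(\ref{eigenvalue IP4}). Your additional remarks --- the absence of a constant eigenfunction explaining why no $\tfrac{t}{T}(\psi_0-\varphi_0)$ term appears, the uniform lower bound $\lambda_{ik}\ge(1-|\varepsilon|)/4>0$, and the careful identification of which boundary terms in the threefold integration by parts are killed by $\cos\bigl(k+\tfrac12\bigr)\pi=0$ versus which genuinely require the anti-periodic hypotheses --- go slightly beyond what the paper spells out but are entirely in its spirit.
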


\subsection{Example Solution}
For the sake of illustration, we present here a simple example solution for the Inverse Problem IP1. For this purpose, we consider the following choice of conditions (\ref{2.2}):
\begin{equation*}
u\left({x,0} \right) = 0, \quad u\left( {x,T} \right) = \sin x, \quad x \in \left[ { - \pi ,\pi } \right],
\end{equation*}
i.e., we have $\varphi \left( x \right)=0$ and $\psi \left( x \right)=\sin x$. Calculating the coefficients of the series solutions as given in Theorem \ref{th1}, we get
\[ u(x,t)= \frac{{{1-e^{ - \left( {1 + \varepsilon } \right) t}} }}{{{1 - e^{ - \left( {1 + \varepsilon } \right) T} } }} \, \sin x, \quad {\text{and}} \quad f(x)= \frac{{{1 + \epsilon}} }{{{1 - e^{ - \left( {1 + \varepsilon } \right) T} } }} \, \sin x. \]
These solutions are illustrated in the following figures:
\begin{figure}
\label{fig2}
\includegraphics[height=7cm,width=7cm]{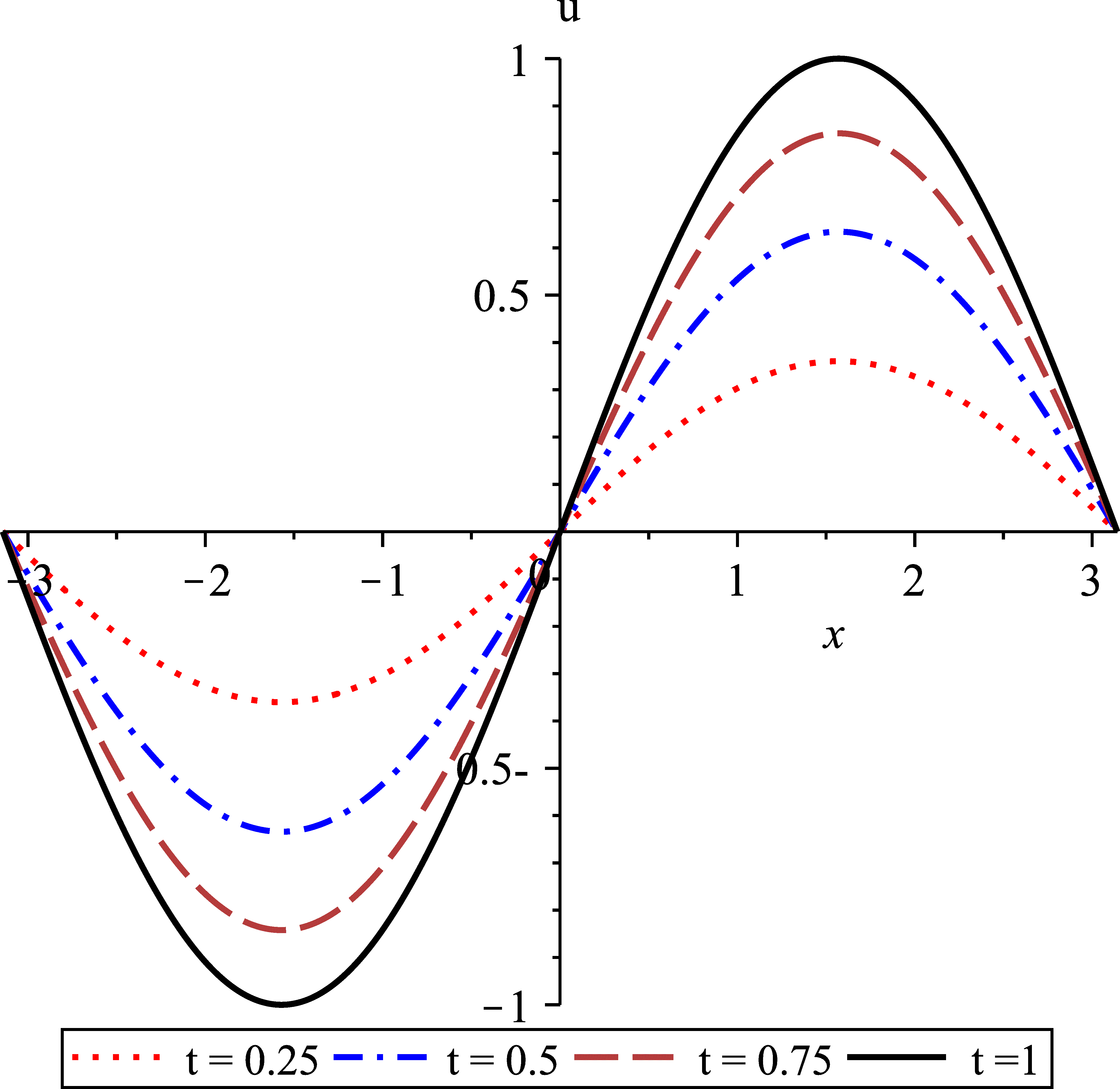}
\includegraphics[height=7.5cm,width=7cm]{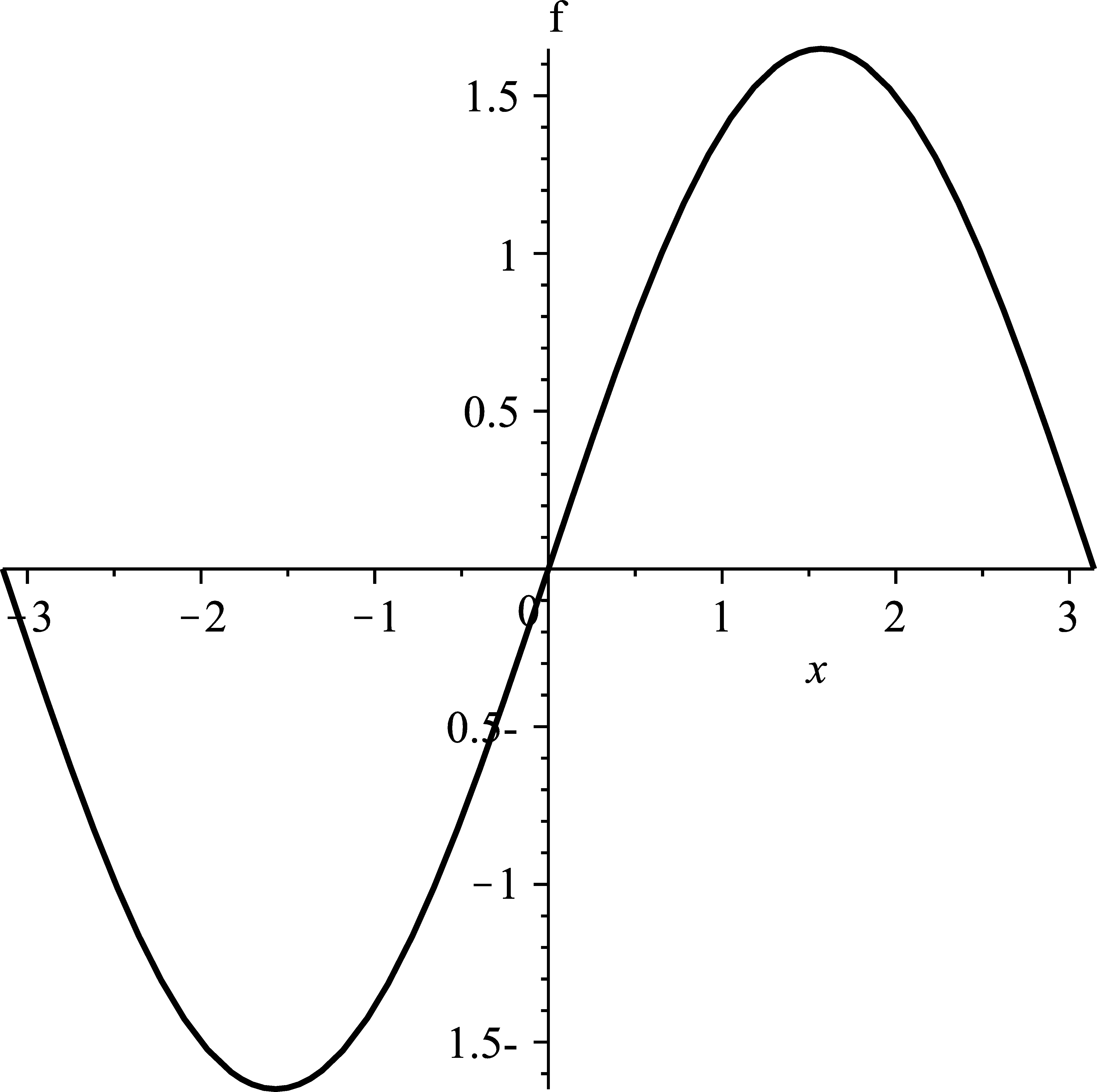}
\caption{Graphs of $u(x,t)$ at different times (left) and $f(x)$ (right) for $\epsilon=0.1$ and $T=1$.}
\end{figure}

\begin{figure}
\label{fig3}
\includegraphics[height=7cm,width=7cm]{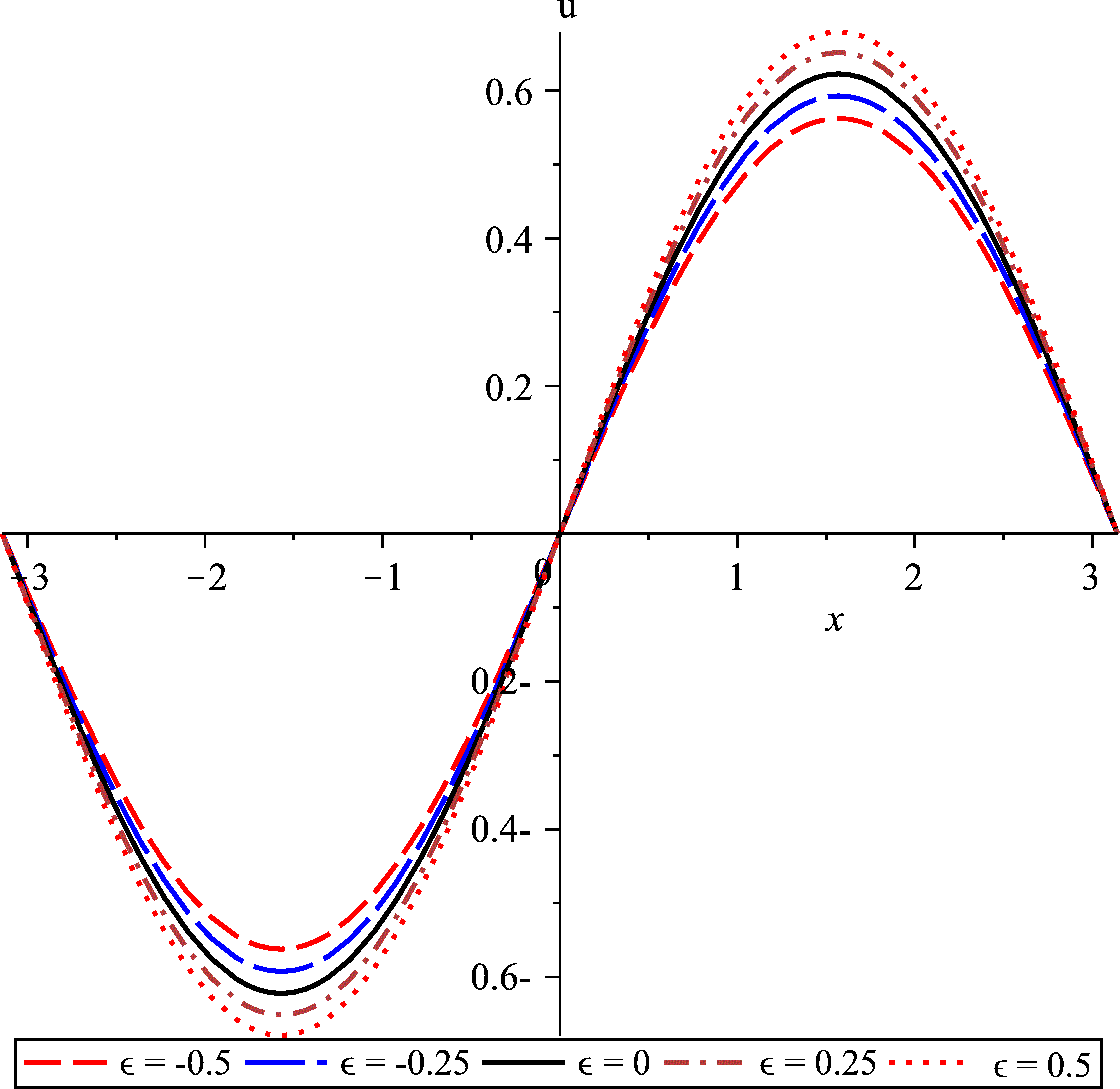}
\includegraphics[height=7cm,width=7cm]{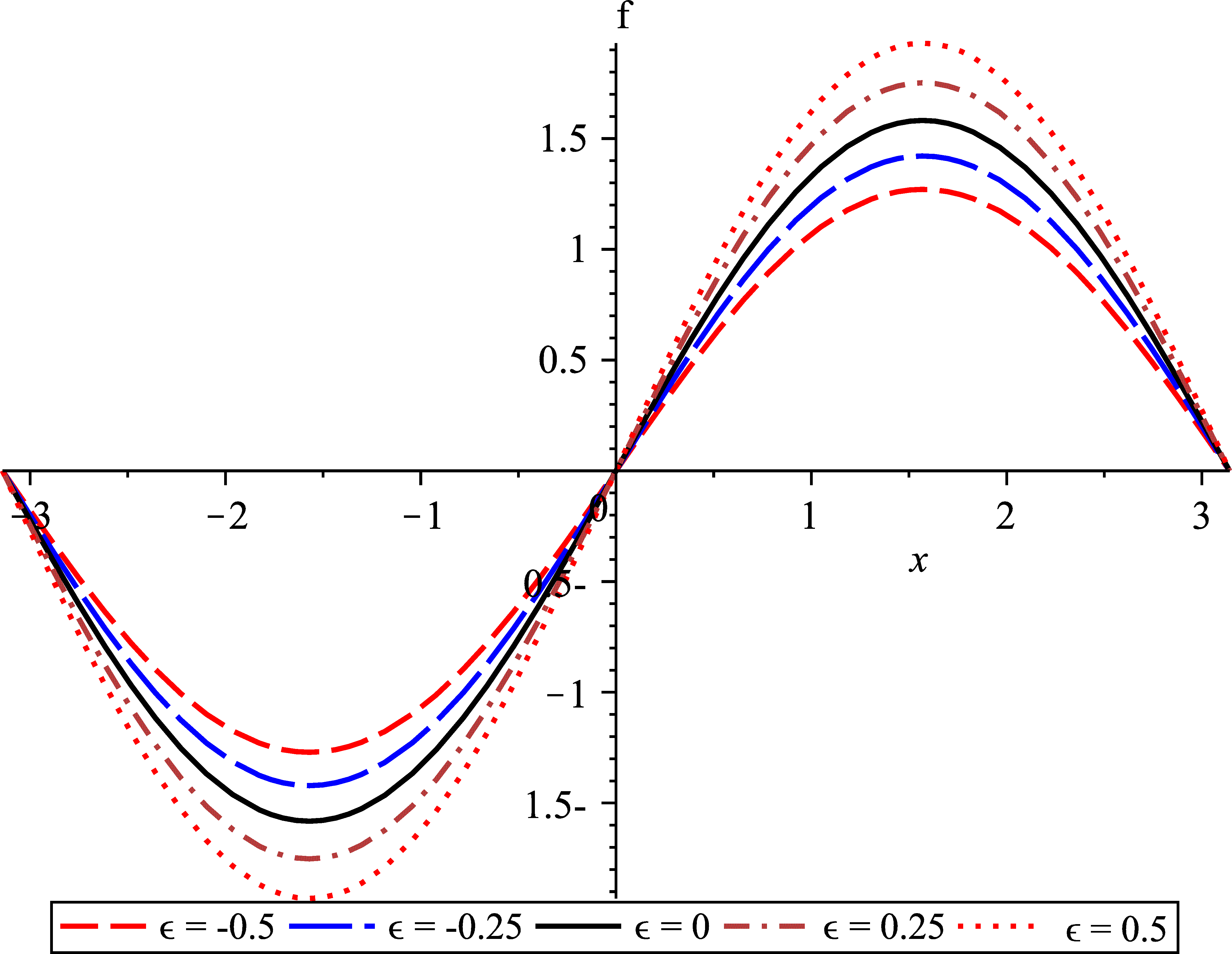}
\caption{Graphs of $u(x,t)$ at $t=0.5$ (left) and $f(x)$ (right) for different values of $\epsilon$ and for $T=1$.}
\end{figure}

\section*{References}

\end{document}